\documentclass[12pt]{amsart}

\usepackage{amssymb, amsfonts}
\usepackage{url}
\usepackage[all,arc]{xy}
\usepackage{amscd}
\usepackage{mathrsfs}
\usepackage{geometry}
\usepackage[colorlinks,citecolor=blue]{hyperref}
\usepackage{comment}
\usepackage{enumerate}
\usepackage{graphicx}
\usepackage{bm}
\usepackage{color}
\usepackage{anyfontsize}
\usepackage{caption}

\numberwithin{equation}{section}

\theoremstyle{plain}
\newtheorem{theorem}{Theorem}[section]
\newtheorem{corollary}[theorem]{Corollary}

\newtheorem{proposition}[theorem]{Proposition}

\theoremstyle{remark}
\newtheorem{remark}{Remark}[section]
\newtheorem{example}[remark]{Example}

\begin{document}


\title{Spectral and Nodal Properties of Static Manifolds with Boundary}

\author{Vladimir Medvedev}

\address{Faculty of Mathematics, National Research University Higher School of Economics, 6 Usacheva Street, Moscow, 119048, Russian Federation}

\email{vomedvedev@hse.ru}



\begin{abstract}
We study compact static manifolds with boundary from a spectral point of view, with particular emphasis on the relation between Dirichlet, Neumann, and Robin spectra and the nodal structure of static potentials. We establish comparison inequalities linking Dirichlet and Neumann eigenvalues with the scalar curvature and derive sufficient spectral conditions ensuring that a static potential changes sign. We also obtain bounds on the number of connected components of the zero set, sufficient conditions for its connectedness, and sufficient conditions ensuring that its components intersect the boundary. Finally, we study variational properties of Robin static triples and characterize compact Robin static triples whose metrics are critical for the first mixed Robin--Dirichlet eigenvalue under admissible volume-preserving deformations.
\end{abstract}

\maketitle


\newcommand\cont{\operatorname{cont}}
\newcommand\diff{\operatorname{diff}}

\newcommand{\dvol}{\text{dA}}
\newcommand{\Ric}{\operatorname{Ric}}
\newcommand{\Hess}{\operatorname{Hess}}
\newcommand{\GL}{\operatorname{GL}}
\newcommand{\myO}{\operatorname{O}}
\newcommand{\myP}{\operatorname{P}}
\newcommand{\eye}{\operatorname{Id}}
\newcommand{\myF}{\operatorname{F}}
\newcommand{\Vol}{\operatorname{Vol}}
\newcommand{\odd}{\operatorname{odd}}
\newcommand{\even}{\operatorname{even}}
\newcommand{\ol}{\overline}
\newcommand{\mye}{\operatorname{E}}
\newcommand{\myo}{\operatorname{o}}
\newcommand{\myt}{\operatorname{t}}
\newcommand{\irr}{\operatorname{Irr}}
\newcommand{\mydiv}{\operatorname{div}}
\newcommand{\curl}{\operatorname{curl}}
\newcommand{\re}{\operatorname{Re}}
\newcommand{\im}{\operatorname{Im}}
\newcommand{\can}{\operatorname{can}}
\newcommand{\scal}{\operatorname{scal}}
\newcommand{\tr}{\operatorname{trace}}
\newcommand{\sgn}{\operatorname{sgn}}
\newcommand{\SL}{\operatorname{SL}}
\newcommand{\myspan}{\operatorname{span}}
\newcommand{\mydet}{\operatorname{det}}
\newcommand{\SO}{\operatorname{SO}}
\newcommand{\SU}{\operatorname{SU}}
\newcommand{\specl}{\operatorname{spec_{\mathcal{L}}}}
\newcommand{\fix}{\operatorname{Fix}}
\newcommand{\id}{\operatorname{id}}
\newcommand{\grad}{\operatorname{grad}}
\newcommand{\singsup}{\operatorname{singsupp}}
\newcommand{\wave}{\operatorname{wave}}
\newcommand{\ind}{\operatorname{ind}}
\newcommand{\mynull}{\operatorname{null}}
\newcommand{\inj}{\operatorname{inj}}
\newcommand{\arcsinh}{\operatorname{arcsinh}}
\newcommand{\Spec}{\operatorname{Spec}}
\newcommand{\Ind}{\operatorname{Ind}}
\newcommand{\Nul}{\operatorname{Nul}}
\newcommand{\inrad}{\operatorname{inrad}}
\newcommand{\mult}{\operatorname{mult}}
\newcommand{\Length}{\operatorname{Length}}
\newcommand{\Area}{\operatorname{Area}}
\newcommand{\Ker}{\operatorname{Ker}}
\newcommand{\floor}[1]{\left \lfloor #1  \right \rfloor}

\newcommand\restr[2]{{
  \left.\kern-\nulldelimiterspace 
  #1 
  \vphantom{\big|} 
  \right|_{#2} 
  }}


\section{Introduction}

Let $(M^n,g)$, $n\geqslant3$, be a smooth compact Riemannian manifold with nonempty boundary. We use the conventions
\[
\Delta_g=\operatorname{div}_g\nabla^g,\qquad B_g(X,Y)=g(\nabla_X\nu,Y),\qquad H_g=\operatorname{tr}_{\gamma}B_g,
\]
where $\nu$ is the outward unit normal and $\gamma=g|_{T\partial M}$.

The triple $(M,g,V)$ is called a \emph{static manifold with boundary} if $V\in C^\infty(M)$ is a nontrivial solution of
\begin{equation}\label{eq:static-boundary-system}
\begin{cases}
\operatorname{Hess}_gV-(\Delta_gV)g-V\operatorname{Ric}_g=0& \text{in }M,\\
(\partial_\nu V)\gamma-VB_g=0 & \text{on }\partial M.
\end{cases}
\end{equation}
The function $V$ is referred to as a \emph{static potential}.

The boundary condition in~\eqref{eq:static-boundary-system} first appeared in~\cite{cruz2019prescribing}, while an equivalent formulation was given in~\cite{almaraz2020mass}. The terminology \emph{static manifolds with boundary} was introduced in~\cite{AlmarazDeLima2025}. Following~\cite{sheng2026obata}, we call a static manifold with boundary \emph{simple} if both the manifold and its boundary are connected. 

The study of static manifolds with boundary is relatively recent, and the existing literature on the subject is still sufficiently limited to be listed in full; see~\cite{cruz2019prescribing,ho2020deformation,almaraz2020mass,huang2022scalar,cruz2023critical,cruz2023static,sheng2024localized,medvedev2024static,sheng2025static,medvedev2025some,sheng2026obata,medvedev2026dimension,medvedev2026static}. Recently, a generalization of static manifolds with boundary has been introduced in~\cite{almaraz2026mass}.

We recall the basic geometric properties established in \cite[Proposition~1]{cruz2023static}. The scalar curvature $R_g$ is constant, and each boundary component is totally umbilical
with constant mean curvature. The static equations imply
\begin{equation}\label{eq:static-interior-identities}
-\Delta_gV=\dfrac{R_g}{n-1}V,\qquad\operatorname{Hess}_gV=V\left(\operatorname{Ric}_g-\dfrac{R_g}{n-1}g\right),
\end{equation}
and
\begin{equation}\label{eq:static-boundary-identities}
\partial_\nu V=\dfrac{H_g}{n-1}V,\qquad B_g=\dfrac{H_g}{n-1}\gamma.
\end{equation}
The mean curvature may take different constant values on different boundary components. Hence, every static potential \(V\) satisfies the \textit{Robin problem}
\begin{equation*}
\left\{
   \begin{array}{rcl}
\Delta_gV&=&-\dfrac{R_g}{n-1}V\quad\mbox{in}\quad M,\\
\dfrac{\partial V}{\partial \nu}&=&\dfrac{H_g}{n-1}V\quad\mbox{on}\quad\partial M.
\end{array}
   \right.
\end{equation*}

In this paper, we study static manifolds with boundary through their spectral data, including the Dirichlet, Neumann, and Robin spectra. We show how this spectral information can be used to derive sufficient conditions determining whether the zero set of a static potential is empty, connected, or intersects the boundary; see Section~\ref{sec1}. More precisely, in Proposition~\ref{prop:static-spectral-comparison} we establish a general comparison inequality relating the Neumann and Dirichlet eigenvalues to the scalar curvature of a static manifold with boundary. As a consequence, Corollary~\ref{cor:dirichlet-sign-change} gives a sufficient condition in terms of the first Dirichlet eigenvalue ensuring that a static potential changes sign. Further inequalities involving Neumann eigenvalues and the scalar curvature, together with their consequences for the nodal structure of static potentials, are obtained in Proposition~\ref{prop:neumann-nodal-bound} and Corollary~\ref{cor:neumann-sign-nodal-criteria}. In Proposition~\ref{prop:zero-set-components}, we derive a general bound on the number of connected components of the zero set of a static potential and obtain sufficient conditions for this zero set to be connected. Proposition~\ref{prop:no-interior-nodal-domains} provides sufficient conditions ensuring that every component of the zero set intersects the boundary of the static manifold. By combining these results with those from our previous work~\cite[Theorem~1.11]{medvedev2025some}, we obtain Corollaries~\ref{cor:spectral-nodal-area} and~\ref{cor:spectral-nodal-disk}. Finally, Proposition~\ref{prop:cheeger-boundary-layer-sign-change} gives a sufficient condition for a static potential to change sign in terms of explicit geometric quantities.

Section~\ref{sec2} is devoted to the variational properties of Robin static triples, introduced in~\cite{medvedev2025some}. In particular, Theorem~\ref{prop:intrinsic-robin-static-criticality} characterizes those compact Robin static triples whose metrics are critical for the first mixed Robin--Dirichlet eigenvalue under volume-preserving deformations.

\section{Notation and conventions}

Throughout the paper, we use the following notation and conventions:

\begin{itemize}
    \item The \emph{nodal set} of the static potential $V$ is denoted by
    \[
    \Sigma:=\{x\in M:V(x)=0\}.
    \]

    \item Let $d(V)$ be the number of \emph{nodal domains} of $V$, namely, the number of connected components of $M\setminus\Sigma$, with connectedness understood in the relative topology of $M$.

    \item The symbol $m(V)$ stands for the number of connected components of $\Sigma$. By convention, $m(V)=0$ when $\Sigma=\varnothing$.
    
     \item Consequently, $d(V)=1$ if and only if $V$ is nowhere vanishing, whereas $\Sigma\neq\varnothing$ implies $d(V)\geqslant 2$.
    
\item The Dirichlet spectrum of a compact manifold with boundary $(M,g)$ is written as
\[
0<\lambda_1^D(M,g)<\lambda_2^D(M,g)\leqslant \lambda_3^D(M,g)\leqslant\ldots\nearrow+\infty,
\]
while its Neumann spectrum is written as
\[
0=\lambda_1^N(M,g)<\lambda_2^N(M,g)\leqslant \lambda_3^N(M,g)\leqslant\ldots\nearrow+\infty.
\]
All eigenvalues are counted with multiplicity, and each eigenvalue has finite multiplicity.
    
    \item For a nodal domain $\Omega$, the boundary of $\overline{\Omega}$ consists of the nodal part $\overline{\Omega}\cap\Sigma$ together with the part inherited from $\partial M$. Whenever these two parts intersect, they meet orthogonally, see~\cite[Proposition~1]{cruz2023static}.

    \item If $\overline{\Omega}\subset\operatorname{Int}(M)$ then $|V|$ is a positive Dirichlet eigenfunction on $\Omega$ corresponding to the eigenvalue $\dfrac{R_g}{n-1}$.

\end{itemize}

\section{Results}

\subsection{Spectral bounds and the zero set}\label{sec1}

In our previous papers~\cite{medvedev2024static,medvedev2026static}, we studied static manifolds with boundary under two assumptions on the static potential: either \(V\) is nowhere vanishing or its zero set is connected. In this section, we show how spectral information can be used to characterize situations in which these assumptions are automatically satisfied. More precisely, we derive conditions ensuring that \(V\) is nowhere vanishing and, when \(V\) does vanish, conditions guaranteeing that its zero set is connected.

As observed above, a static potential on a static manifold with boundary satisfies a Robin boundary value problem. In the particular case when the boundary is minimal, this Robin condition reduces to the Neumann condition. Hassannezhad and Sher~\cite{HassannezhadSher} also established a useful relation between the Robin and Dirichlet spectra, which we now recall.

For a real-valued function $b$ on $\partial M$, let $\lambda_k^R(M,g;b)$ denote the $k$-th eigenvalue of
\[
-\Delta_gu=\lambda u\quad\text{in }M,\qquad \partial_\nu u=bu\quad\text{on }\partial M.
\]
Its quadratic form is
\begin{equation}\label{eq:robin-quadratic-form}
Q_b(u)=\int_M|\nabla^gu|_g^2\,d\mu_g-\int_{\partial M}bu^2\,d\sigma_g,\qquad u\in H^1(M).
\end{equation}

If $b=\alpha=const$, we write for brevity  
\[
\lambda_k(\alpha)=\lambda_k^R(M,g;\alpha).
\]
By~\cite[Proposition~2.6]{HassannezhadSher}, the function $\alpha\mapsto\lambda_k(\alpha)$ is continuous and strictly decreasing. Moreover,
\begin{equation}\label{eq:robin-eigenvalues-monotonicity}
\lim_{\alpha\to-\infty}\lambda_k(\alpha)=\lambda_k^D(M,g).
\end{equation}

Our first result relies on this relation between the Robin and Dirichlet spectra.

\begin{proposition}\label{prop:static-spectral-comparison}
Let $(M,g,V)$ be a compact connected static manifold with boundary, and suppose that
\[
\dfrac{R_g}{n-1}=\lambda_k^R\left(M,g;\dfrac{H_g}{n-1}\right),
\]
i.e. $V$ is a $k$-th eigenfunction with eigenvalue $\dfrac{R_g}{n-1}$.
Then
\begin{equation}\label{eq:static-dirichlet-comparison}
\lambda_k^D(M,g)>\dfrac{R_g}{n-1}.
\end{equation}
In particular, if $V$ is nowhere vanishing, then
\begin{equation}\label{eq:positive-potential-dirichlet-gap}
\lambda_1^D(M,g)>\dfrac{R_g}{n-1}.
\end{equation}
If additionally $H_g\leqslant0$, then
\begin{equation}\label{eq:static-neumann-dirichlet-comparison}
\lambda_k^N(M,g)\leqslant \dfrac{R_g}{n-1}<\lambda_k^D(M,g).
\end{equation}
\end{proposition}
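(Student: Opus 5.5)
The plan is to compare the three spectra through the min--max characterization of the Robin quadratic form \eqref{eq:robin-quadratic-form}, using the fact that $Q_b$ is pointwise monotone in $b$. Write $b=\dfrac{H_g}{n-1}$. This is a locally constant, hence bounded, function on $\partial M$, since $H_g$ is constant on each boundary component. The hypothesis reads $\dfrac{R_g}{n-1}=\lambda_k^R(M,g;b)$.

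\textbf{Step 1 (monotonicity in $b$).} If $b_1\leqslant b_2$ pointwise on $\partial M$, then $Q_{b_1}(u)\geqslant Q_{b_2}(u)$ for every $u\in H^1(M)$. By min--max over $k$-dimensional subspaces of $H^1(M)$, this gives $\lambda_k^R(M,g;b_1)\geqslant \lambda_k^R(M,g;b_2)$.

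\textbf{Step 2 (reduction to a constant Robin parameter).} Choose a constant $\alpha<\min_{\partial M} b$. Step~1 gives
\[
\lambda_k^R(M,g;b)\leqslant \lambda_k(\alpha).
\]

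\textbf{Step 3 (strict comparison with Dirichlet).} By \cite[Proposition~2.6]{HassannezhadSher}, the function $\alpha\mapsto\lambda_k(\alpha)$ is strictly decreasing and tends to $\lambda_k^D(M,g)$ as $\alpha\to-\infty$, by \eqref{eq:robin-eigenvalues-monotonicity}. So $\lambda_k(\alpha)$ increases strictly as $\alpha$ decreases, and its limit is therefore strictly larger than any of its values:
\[
\lambda_k(\alpha)<\lambda_k(\alpha-1)\leqslant \lambda_k^D(M,g).
\]
Chaining this with Step~2 gives
\[
\frac{R_g}{n-1}=\lambda_k^R(M,g;b)\leqslant\lambda_k(\alpha)<\lambda_k^D(M,g),
\]
which is \eqref{eq:static-dirichlet-comparison}. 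The only delicate point in the whole argument is this strictness. Using a constant $\alpha$ strictly below $b$ avoids having to exclude equality directly, for instance through a unique-continuation argument for a function that would be simultaneously a Dirichlet and a Robin eigenfunction.

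\textbf{Step 4 (nowhere vanishing potential).} If $V$ has no zeros, then $V$ is a first Robin eigenfunction. Indeed, the first Robin eigenfunction $u_1$ is positive, so it is not $L^2$-orthogonal to the sign-definite $V$. Eigenfunctions of the self-adjoint Robin problem with distinct eigenvalues are orthogonal, so $V$ and $u_1$ share the eigenvalue, and thus $\dfrac{R_g}{n-1}=\lambda_1^R(M,g;b)$. Applying the above with $k=1$ yields \eqref{eq:positive-potential-dirichlet-gap}.

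\textbf{Step 5 (the case $H_g\leqslant 0$).} Here $b\leqslant 0$, so Step~1 with $b_1=b$ and $b_2=0$ gives
\[
\lambda_k^R(M,g;b)\geqslant\lambda_k^R(M,g;0)=\lambda_k^N(M,g),
\]
since $Q_0$ is exactly the Neumann form. Combining this with \eqref{eq:static-dirichlet-comparison} gives \eqref{eq:static-neumann-dirichlet-comparison}.

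If $\dfrac{R_g}{n-1}$ is a multiple Robin eigenvalue, the argument applies to every index $k$ for which the hypothesis of the proposition holds.
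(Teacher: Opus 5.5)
Your proposal is correct and follows the paper's proof essentially step for step. You use monotonicity of $Q_b$ in $b$ to compare with a constant Robin parameter, strict decrease of $\alpha\mapsto\lambda_k(\alpha)$ together with its limit $\lambda_k^D(M,g)$ to get the strict inequality, the fact that a nowhere-vanishing $V$ is a first eigenfunction, and $Q_{H_g/(n-1)}\geqslant Q_0$ for the Neumann bound; the only differences are that the paper takes $\alpha=\min_{\partial M}H_g/(n-1)$ directly (choosing $\alpha$ strictly smaller is harmless but unnecessary) and merely asserts the $k=1$ identification, for which you supply the orthogonality argument.
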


\begin{proof}
By~\eqref{eq:robin-eigenvalues-monotonicity}, $\lambda_k(\alpha)<\lambda_k^D(M,g)$ for every real constant $\alpha$. The min--max principle applied to \eqref{eq:robin-quadratic-form} gives
\[
\lambda_k^R\left(M,g;\dfrac{H_g}{n-1}\right)\leqslant\lambda_k\left(\dfrac{\min_{\partial M}H_g}{n-1}\right)<\lambda_k^D(M,g),
\]
proving \eqref{eq:static-dirichlet-comparison}.

If $V$ is nowhere vanishing, it is the first Robin problem with eigenvalue $\dfrac{R_g}{n-1}$, i.e. $k=1$.

If $H_g\leqslant0$, then $Q_{H_g/(n-1)}\geqslant Q_0$, so
\[
\lambda_k^N(M,g)=\lambda_k^R\left(M,g;0\right)\leqslant\lambda_k^R\left(M,g;\dfrac{H_g}{n-1}\right)=\dfrac{R_g}{n-1}.
\]
\end{proof}

\begin{remark}\label{rem:positive-scalar-curvature}
Notice that the assumption \(H_g\leqslant 0\) implies \(R_g\geqslant 0\); see, for instance,~\cite{cruz2019prescribing,sheng2025static}. Hence, the comparison in~\eqref{eq:static-neumann-dirichlet-comparison} is meaningful regardless of the sign of $R_g$.
\end{remark}

The preceding result immediately yields the following corollary.

\begin{corollary}\label{cor:dirichlet-sign-change}
Let $(M,g,V)$ be a compact connected static manifold with boundary such that $R_g>0$. If $\lambda_1^D(M,g)\leqslant \dfrac{R_g}{n-1}$, then $V$ changes sign. In particular, under the normalization $R_g=n(n-1)$,
\[
\lambda_1^D(M,g)\leqslant n \quad\Longrightarrow\quad V\text{ changes sign}.
\]
\end{corollary}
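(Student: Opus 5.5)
The plan is to argue by contraposition, using the second assertion \eqref{eq:positive-potential-dirichlet-gap} of Proposition~\ref{prop:static-spectral-comparison}. Suppose that $V$ does not change sign. We will show that this forces $\lambda_1^D(M,g)>\frac{R_g}{n-1}$, which contradicts the hypothesis.

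The key step is to pass from ``$V$ does not change sign'' to ``$V$ is nowhere vanishing'', since the Proposition is stated under the latter assumption. Replacing $V$ by $-V$ if necessary, we may assume $V\geqslant 0$ on $M$. By \eqref{eq:static-interior-identities}, $V$ satisfies $\Delta_gV=-\frac{R_g}{n-1}V\leqslant 0$, so $V$ is superharmonic. Since $V$ is nontrivial and $M$ is connected, the strong maximum principle gives $V>0$ in $\operatorname{Int}(M)$.

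Now suppose $V(p)=0$ at some point $p\in\partial M$. Then $p$ is a boundary minimum of a positive superharmonic function, so the Hopf lemma gives $\partial_\nu V(p)<0$. On the other hand, the boundary identity \eqref{eq:static-boundary-identities} gives $\partial_\nu V(p)=\frac{H_g}{n-1}V(p)=0$. This contradiction shows that $V>0$ on all of $M$.

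With $V$ nowhere vanishing, the Proposition applies and yields $\lambda_1^D(M,g)>\frac{R_g}{n-1}$. This contradicts $\lambda_1^D(M,g)\leqslant\frac{R_g}{n-1}$, so $V$ must change sign. The normalized statement follows by substituting $R_g=n(n-1)$, which gives $\frac{R_g}{n-1}=n$.

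The only delicate point is the boundary step, namely excluding zeros of a nonnegative $V$ on $\partial M$. The Hopf lemma together with the Robin condition settles it, and the hypothesis $R_g>0$ ensures the sign needed for superharmonicity. (Even without $R_g>0$, one could apply the maximum principle to $\Delta_g+c$ with $c\leqslant0$ after adding a suitable multiple of $V$.)
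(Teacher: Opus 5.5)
Your argument is correct and follows the paper's route: the corollary is the contrapositive of \eqref{eq:positive-potential-dirichlet-gap} in Proposition~\ref{prop:static-spectral-comparison}. Your strong maximum principle and Hopf lemma step shows that a sign-preserving $V$ is nowhere vanishing, including on $\partial M$ via $\partial_\nu V=\frac{H_g}{n-1}V$; this makes explicit a point the paper takes for granted when it asserts that every zero of a nontrivial static potential entails a sign change.
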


We now obtain a complementary criterion involving the Neumann spectrum. We have already obtained a result in Proposition~\ref{prop:static-spectral-comparison}. Below we show that it can be improved. 

For the nodal domains $\Omega_1,\ldots,\Omega_{d(V)}$ of $V$, define the boundary correction term
\[
\mathcal C_g(V)=\max_{1\leqslant j\leqslant d(V)}\dfrac{\displaystyle\int_{\overline{\Omega_j}\cap\partial M}H_gV^2\,d\sigma_g}{\displaystyle\int_{\Omega_j}V^2\,d\mu_g}.
\]

\begin{proposition}\label{prop:neumann-nodal-bound}
Let $(M^n,g,V)$ be a compact connected static manifold with boundary, and let $\Omega_1,\ldots,\Omega_{d(V)}$ be the nodal domains of $V$. Then
\[
\lambda_{d(V)}^N(M,g)\leqslant\dfrac{R_g+\mathcal C_g(V)}{n-1}
\]
In particular, if $H_g\leqslant0$, then
\[
\lambda_{d(V)}^N(M,g)\leqslant \dfrac{R_g}{n-1}.
\]
\end{proposition}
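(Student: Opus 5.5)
Plan: a Courant-type test-function argument with the Neumann Rayleigh quotient, using the restrictions of $V$ to its nodal domains as test functions. For $j=1,\ldots,d(V)$ I set $u_j=V\chi_{\Omega_j}$, that is, $u_j=V$ on $\Omega_j$ and $u_j=0$ elsewhere. Since $V$ is smooth and vanishes on $\partial\Omega_j\cap\operatorname{Int}(M)\subset\Sigma$, each $u_j$ lies in $H^1(M)$; this is the standard fact for truncations of continuous $H^1$ functions along their zero set. The $u_j$ have disjoint supports, so they are linearly independent and span a $d(V)$-dimensional subspace $E\subset H^1(M)$.

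The key computation is the Rayleigh quotient of each $u_j$. I integrate by parts on $\Omega_j$, using $-\Delta_gV=\frac{R_g}{n-1}V$ from \eqref{eq:static-interior-identities}:
\[
\int_{\Omega_j}|\nabla^g V|^2\,d\mu_g=\frac{R_g}{n-1}\int_{\Omega_j}V^2\,d\mu_g+\int_{\partial\Omega_j}V\,\partial_\nu V\,d\sigma_g .
\]
On the nodal part of $\partial\Omega_j$ we have $V=0$, so that contribution vanishes. On $\overline{\Omega_j}\cap\partial M$ I use $\partial_\nu V=\frac{H_g}{n-1}V$ from \eqref{eq:static-boundary-identities}. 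This gives
\[
\int_{M}|\nabla^g u_j|^2\,d\mu_g=\frac{1}{n-1}\Bigl(R_g\int_{\Omega_j}V^2\,d\mu_g+\int_{\overline{\Omega_j}\cap\partial M}H_gV^2\,d\sigma_g\Bigr)\leqslant \frac{R_g+\mathcal C_g(V)}{n-1}\int_M u_j^2\,d\mu_g ,
\]
where the last step is the definition of $\mathcal C_g(V)$ as a maximum over $j$.

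The main obstacle is justifying this integration by parts, because $\partial\Omega_j$ may be irregular where $\Sigma$ is singular or where it meets $\partial M$. I would handle it by the usual approximation. Take $\varphi_\varepsilon(V)$ with $\varphi_\varepsilon(t)=\max(|t|-\varepsilon,0)\operatorname{sgn}t$ restricted to $\Omega_j$. These functions are Lipschitz and vanish near the interior nodal boundary, so Green's formula applies with only the $\partial M$ boundary term. Letting $\varepsilon\to0$ and using dominated convergence recovers the identity. Alternatively, the nodal set has measure zero, and $\int_M\nabla V\cdot\nabla(\psi V)$ can be computed directly with $\psi=\chi_{\Omega_j}$ approximated by $\varphi_\varepsilon$.

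Finally, for $u=\sum c_ju_j\in E$, the disjointness of supports makes both the Dirichlet energy and the $L^2$ norm split over $j$, and the boundary integrals in the identity above likewise split over the pieces $\overline{\Omega_j}\cap\partial M$. Hence the Neumann Rayleigh quotient $\int|\nabla u|^2/\int u^2$ is at most $\frac{R_g+\mathcal C_g(V)}{n-1}$ for every $u\in E\setminus\{0\}$. The min--max characterization of $\lambda^N_{d(V)}(M,g)$ over $d(V)$-dimensional subspaces of $H^1(M)$ then gives the first inequality. If $H_g\leqslant0$, every boundary integral in the definition of $\mathcal C_g(V)$ is nonpositive, so $\mathcal C_g(V)\leqslant0$ and the second inequality follows.
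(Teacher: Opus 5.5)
Your proposal is correct and follows the paper's proof. It uses the same test functions $u_j=V\chi_{\Omega_j}$, the same energy identity derived from $-\Delta_gV=\frac{R_g}{n-1}V$ and $\partial_\nu V=\frac{H_g}{n-1}V$, and the same min--max argument over $\operatorname{span}\{u_1,\ldots,u_{d(V)}\}$. The paper simply tests the weak Robin equation for $V$ on all of $M$ against $u_j\in H^1(M)$, which is your ``alternatively'' remark; this makes the $\varepsilon$-truncation argument for the regularity of $\partial\Omega_j$ unnecessary.
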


\begin{proof}
For each $j$, let $u_j$ equal $V$ on $\Omega_j$ and zero elsewhere on $M$. Since $V$ vanishes on the nodal face of $\Omega_j$, we have $u_j\in H^1(M,d\mu_g)$. Testing the first equation of~\eqref{eq:static-interior-identities} against $u_j$ and using $\partial_\nu V=\dfrac{H_g}{n-1}V$ on $\partial M$, we obtain
\begin{equation}\label{eq:identity}
(n-1)\int_M|\nabla^gu_j|_g^2\,d\mu_g=R_g\int_{\Omega_j}V^2\,d\mu_g+\int_{\overline{\Omega_j}\cap\partial M}H_gV^2\,d\sigma_g.
\end{equation}
The functions $u_1,\ldots,u_{d(V)}$ are linearly independent, and their products and gradient inner products vanish almost everywhere for distinct indices. Thus, setting $E=\operatorname{span}\{u_1,\ldots,u_{d(V)}\}$ the Neumann min--max principle yields
\[
\lambda_{d(V)}^N(M,g)\leqslant\sup_{u\in E\setminus\{0\}}\dfrac{\displaystyle\int_M|\nabla^gu|_g^2\,d\mu_g}{\displaystyle\int_Mu^2\,d\mu_g}=\max_{1\leqslant j\leqslant d(V)}\dfrac{\displaystyle\int_M|\nabla^gu_j|_g^2\,d\mu_g}{\displaystyle\int_Mu_j^2\,d\mu_g}.
\]
Substituting~\eqref{eq:identity} proves the first inequality. When $H_g\leqslant0$ on $\partial M$, every boundary integral is nonpositive, giving the stated particular case.
\end{proof}

\begin{remark}
If $R_g\leqslant 0$, then $H_g\geqslant 0$, which implies that $\mathcal C_g(V)\geqslant 0$; see, for instance,~\cite{cruz2019prescribing,sheng2025static}. Hence, the bound in the previous proposition is meaningful regardless of the sign of $R_g$.
\end{remark}

\begin{corollary}\label{cor:neumann-sign-nodal-criteria}
Let $(M^n,g,V)$ be a compact connected static manifold with
boundary. Then:
\begin{enumerate}
\item if $\lambda_2^N(M,g)>\dfrac{R_g+\mathcal C_g(V)}{n-1}$, then $V$ is nowhere vanishing;

\item if $V$ changes sign, then $\lambda_2^N(M,g)\leqslant\dfrac{R_g+\mathcal C_g(V)}{n-1}$. If, in addition, $\lambda_3^N(M,g)>\dfrac{R_g+\mathcal C_g(V)}{n-1}$, then $V$ has exactly two nodal domains. If moreover $H^1(M;\mathbb R)=0$, its nodal set is connected.
\end{enumerate}
\end{corollary}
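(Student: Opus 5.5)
The plan is to read everything off Proposition~\ref{prop:neumann-nodal-bound}, using that $k\mapsto\lambda_k^N(M,g)$ is nondecreasing, and to add one topological argument for the connectedness statement. Throughout, write $\Lambda=\dfrac{R_g+\mathcal C_g(V)}{n-1}$, so the proposition reads $\lambda^N_{d(V)}(M,g)\leqslant\Lambda$.

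For (1) I argue by contraposition. Suppose $V$ vanishes somewhere, i.e. $\Sigma\neq\varnothing$. By the conventions, $d(V)\geqslant2$, so
\[
\lambda_2^N(M,g)\leqslant\lambda_{d(V)}^N(M,g)\leqslant\Lambda .
\]
This contradicts the hypothesis $\lambda_2^N(M,g)>\Lambda$, so $V$ is nowhere vanishing.

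For the first two claims of (2), the same monotonicity does the work. If $V$ changes sign, then $\Sigma\ne\varnothing$ and $d(V)\geqslant 2$, which gives $\lambda_2^N\leqslant\Lambda$ as above. If moreover $\lambda_3^N>\Lambda$, then $d(V)\geqslant3$ would force $\lambda_3^N\leqslant\lambda^N_{d(V)}\leqslant\Lambda$, which is impossible. Hence $d(V)=2$.

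The main obstacle is the last claim: two nodal domains together with $H^1(M;\mathbb R)=0$ should give $\Sigma$ connected.
\begin{itemize}
\item First I need some regularity of $\Sigma$. I will use that $0$ is a regular value of $V$. At a point where $V=0$ and $\nabla V=0$, the static equations give $\Hess_gV=0$ there. The system \eqref{eq:static-boundary-system} is then an overdetermined linear system for $(V,\nabla V)$ along curves, so $V\equiv0$ by the standard ODE argument along geodesics, contradicting nontriviality. This is the fact used in \cite{cruz2023static} for the orthogonality statement.
\item Consequently, $\Sigma$ is a smooth, properly embedded, two-sided hypersurface meeting $\partial M$ orthogonally.
\item Now suppose $\Sigma=\Sigma_1\sqcup\Sigma_2\sqcup\cdots$ has at least two components. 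Each $\Sigma_i$ separates a neighborhood of itself into a side where $V>0$ and a side where $V<0$. Since $d(V)=2$, these sides lie in the positive domain $\Omega_+$ and the negative domain $\Omega_-$ respectively.
\item Pick points $p\in\Sigma_1$ and $q\in\Sigma_2$. Join them by a path inside $\Omega_+$ and by another path inside $\Omega_-$, both connected open sets. Concatenating gives a closed loop $c$ that crosses $\Sigma_1$ transversally exactly once.
\item The class Poincar\'e dual to $\Sigma_1$ is given by a closed $1$-form supported near $\Sigma_1$: take $d\chi$, where $\chi$ is a cutoff going from $0$ to $1$ across $\Sigma_1$. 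This form satisfies $\int_c d\chi=\pm1$. Hence it is not exact, so $H^1_{dR}(M)\neq0$, contradicting $H^1(M;\mathbb R)=0$.
\item For components $\Sigma_1$ meeting the boundary, the same construction works in relative form. The form is still closed on $M$ and the loop stays in $M$, so no change is needed.
\end{itemize}
Therefore $m(V)=1$, i.e. the nodal set is connected.
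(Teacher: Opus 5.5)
Your proof is correct. Part (1) and the first two claims of part (2) are argued exactly as in the paper: Proposition~\ref{prop:neumann-nodal-bound}, monotonicity of $k\mapsto\lambda_k^N(M,g)$, and the fact that any zero of $V$ forces $d(V)\geqslant 2$.

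The difference is in the connectedness claim. The paper does not prove it inside the corollary; it appeals to Proposition~\ref{prop:zero-set-components}. There it forms the nesting graph $G$, whose vertices are the nodal domains and whose edges are the components of $\Sigma$. It asserts that the collapse map $M\to G$ is surjective on $H_1$ by lifting edge loops, and obtains $m(V)\leqslant d(V)-1+b_1(M)$. So when $H^1(M;\mathbb R)=0$, $G$ is a tree and $m(V)=d(V)-1=1$.

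Your argument is the case $d(V)=2$ of the same idea. Your loop through $\Omega_+$ and $\Omega_-$ is precisely the lift of the cycle formed by the two edges $\Sigma_1,\Sigma_2$ of $G$. The difference is that you certify its nontriviality directly, by pairing it with the closed $1$-form dual to $\Sigma_1$. This is self-contained and makes explicit the step the paper only sketches, namely that the lifted loop is nontrivial in $H_1(M;\mathbb R)$. What the paper's route buys is the general bound \eqref{eq:zero-set-topological-bound}, valid for any number of nodal domains and any $b_1(M)$. For the corollary alone, your direct argument suffices.

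Two minor points to tidy. Since $p,q\in\Sigma$, only the interiors of the two connecting paths lie in $\Omega_\pm$. Near $\Sigma_1\cap\partial M$, the collar on which $\chi$ is built must be compatible with $\partial M$, which is possible because $\Sigma$ meets $\partial M$ orthogonally.
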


\begin{proof}
Proposition~\ref{prop:neumann-nodal-bound} gives $\lambda_{d(V)}^N(M,g)\leqslant\dfrac{R_g+\mathcal C_g(V)}{n-1}$. If $V$ changes sign, then $d(V)\geqslant2$, which proves the asserted bound for $\lambda_2^N(M,g)$. Since every zero of a nontrivial static potential entails a sign change, the first conclusion follows by contraposition.

The additional inequality for $\lambda_3^N(M,g)$ excludes $d(V)\geqslant3$, so $d(V)=2$. When $H^1(M;\mathbb R)=0$, connectedness of the nodal set follows from Proposition~\ref{prop:zero-set-components} below.
\end{proof}

\begin{remark}\label{rem:negative-mean-curvature}
If \(H_g\leqslant 0\) on \(\partial M\), then \(\mathcal C_g(V)\leqslant 0\), while Remark~\ref{rem:positive-scalar-curvature} implies that \(R_g\geqslant 0\). In particular:
\begin{enumerate}
\item if $\lambda_2^N(M,g)>\dfrac{R_g}{n-1}$, then $V$ is nowhere vanishing;
\item if $V$ changes sign and $\lambda_3^N(M,g)>\dfrac{R_g}{n-1}$, then $V$ has exactly two nodal domains. If additionally $H^1(M;\mathbb R)=0$, its nodal set is connected.
\end{enumerate}
\end{remark}

\begin{remark}
The existence of exactly two nodal domains does not imply that the nodal set is connected, even for a compact simple static manifold with boundary and positive scalar curvature. Indeed, consider
\[
M=S^1\times S^2_+,\qquad g=dt^2+g_{S^2},\qquad V(t,x)=\sin t,
\]
where $S^1=\mathbb R/(2\pi\mathbb Z)$ and $S^2_+$ is the closed unit round hemisphere. Since $R_g=2$ and
\[
\operatorname{Hess}_gV=-V\,dt^2=V\left(\operatorname{Ric}_g-\dfrac{R_g}{2}g\right),
\]
the interior static equation holds. Moreover, $\partial M$ is connected and totally geodesic, and $\partial_\nu V=0$, so the static boundary condition is also satisfied. The potential has exactly two nodal domains,
\[
(0,\pi)\times S^2_+,\qquad (\pi,2\pi)\times S^2_+,
\]
whereas its nodal set
\[
\Sigma=\bigl(\{0\}\times S^2_+\bigr)\sqcup\bigl(\{\pi\}\times S^2_+\bigr)
\]
has two connected components. 
\end{remark}

The preceding results give complementary tests for the sign of $V$. Under $H_g\leqslant0$, $R_g\geqslant 0$ (see Remark~\ref{rem:positive-scalar-curvature}) and the first conclusion in Remark~\ref{rem:negative-mean-curvature}, followed by Proposition~\ref{prop:static-spectral-comparison}, gives
\[
\lambda_2^N(M,g)>\dfrac{R_g}{n-1}\quad\Longrightarrow\quad V\text{ is nowhere vanishing}\quad\Longrightarrow\quad\lambda_1^D(M,g)>\dfrac{R_g}{n-1}.
\]
Conversely, Corollary~\ref{cor:dirichlet-sign-change} and Remark~\ref{rem:negative-mean-curvature} together imply, still under $H_g\leqslant0$ and $R_g>0$,
\[
\lambda_1^D(M,g)\leqslant\dfrac{R_g}{n-1}\quad\Longrightarrow\quad V\text{ changes sign}\quad\Longrightarrow\quad\lambda_2^N(M,g)\leqslant\dfrac{R_g}{n-1}.
\]

The following proposition provides sufficient conditions for the zero set of $V$ to be connected.

\begin{proposition}\label{prop:zero-set-components}
Let $(M,g,V)$ be a compact connected static manifold with boundary. Let $b_1(M)=\dim H_1(M;\mathbb R)$. Then
\begin{equation}\label{eq:zero-set-topological-bound}
m(V)\leqslant d(V)-1+b_1(M).
\end{equation}
If $H^1(M;\mathbb R)=0$, then $m(V)=d(V)-1$.

Consequently, if $H_g\leqslant0$, then
\[
m(V)\leqslant N_N\left(\dfrac{R_g}{n-1}\right)-1+b_1(M),
\]
where
\[
N_N(t)=\#\{j\geqslant1:\lambda_j^N(M,g)\leqslant t\}
\]
is the Neumann counting function. In particular, if
\[
H^1(M;\mathbb R)=0, \qquad H_g\leqslant0,\qquad \lambda_3^N(M,g)>\dfrac{R_g}{n-1},
\]
and $V$ changes sign, then $\Sigma=V^{-1}(0)$ is connected.
\end{proposition}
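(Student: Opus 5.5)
The plan is to build a graph from the nodal structure and compare its cycles with $H_1(M)$, via Mayer--Vietoris. First the local structure of $\Sigma$. Since $V$ is a nontrivial static potential, $\nabla V\neq0$ on $\Sigma$. Indeed, if $V(p)=0$ and $\nabla V(p)=0$, then $\operatorname{Hess}_gV=V(\operatorname{Ric}_g-\tfrac{R_g}{n-1}g)$ makes $V$ restricted to any geodesic through $p$ satisfy a linear second order ODE with zero initial data. Hence $V\equiv0$ near $p$, and by unique continuation on all of $M$. The same argument works at boundary points, using geodesics entering the interior. Consequently $\Sigma$ is a smooth compact hypersurface, properly embedded and meeting $\partial M$ orthogonally. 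It has finitely many components $\Sigma_1,\dots,\Sigma_{m}$, $m=m(V)$, each two-sided with a tubular neighbourhood $T_i\cong\Sigma_i\times(-1,1)$.

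Next the Mayer--Vietoris step. Set $A=M\setminus\Sigma$, which has $d=d(V)$ components $\Omega_j$, and $B=\bigsqcup_i T_i$. Then $A\cap B\simeq\bigsqcup_i\Sigma_i\times\{\pm\}$ has $2m$ components. The tail of the reduced Mayer--Vietoris sequence in degree $0$ is
\[
H_1(M)\to\tilde H_0(A\cap B)\to\tilde H_0(A)\oplus\tilde H_0(B)\to\tilde H_0(M)=0 .
\]
Taking dimensions gives $\dim\operatorname{im}\partial=(2m-1)-(d-1)-(m-1)=m-d+1$. Since $\dim\operatorname{im}\partial\leqslant b_1(M)$, this yields $m\leqslant d-1+b_1(M)$. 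An equivalent combinatorial view uses the graph with vertices the $\Omega_j$ and edges the $\Sigma_i$, which is connected because $M$ is. Its first Betti number $m-d+1$ injects into $b_1(M)$, since each independent cycle gives a loop in $M$ crossing some $\Sigma_i$ an odd number of times, hence nontrivial in $H_1$.

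For the reverse inequality $m\geqslant d-1$, connectedness of the graph gives $m\geqslant d-1$ directly. Thus $H^1(M;\mathbb R)=0$, equivalently $b_1(M)=0$, forces $m(V)=d(V)-1$. The case $\Sigma=\varnothing$ is consistent with this, since then $m=0$ and $d=1$.

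For the spectral consequences, Proposition~\ref{prop:neumann-nodal-bound} with $H_g\leqslant0$ gives $\lambda^N_{d(V)}\leqslant R_g/(n-1)$, so $d(V)\leqslant N_N(R_g/(n-1))$. Inserting this into \eqref{eq:zero-set-topological-bound} gives the bound on $m(V)$. In the final case, $V$ changes sign, so $d\geqslant2$, while $\lambda_3^N>R_g/(n-1)$ gives $d\leqslant2$. Hence $d=2$, and with $b_1=0$ we get $m=1$, so $\Sigma$ is connected.

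The main obstacle is the regularity step near $\partial M$. One must ensure that $\Sigma$ is a genuine manifold with boundary transverse to $\partial M$, so that tubular neighbourhoods exist and the Mayer--Vietoris/graph argument is legitimate. I would handle this via $\nabla V\neq0$ on $\Sigma$ together with $\partial_\nu V=\tfrac{H_g}{n-1}V=0$ on $\Sigma\cap\partial M$. The latter shows that $\nabla V$ is tangent to $\partial M$ there, which gives orthogonal intersection.
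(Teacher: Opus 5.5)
Your proof is correct, and it reaches the bound by a different mechanism than the paper. The paper uses the same nodal graph $G$ (vertices the nodal domains, edges the components of $\Sigma$). It bounds $b_1(G)=m(V)-d(V)+1$ by building a continuous collapse map $M\to G$ from boundary-compatible tubular neighbourhoods, and argues that this map is surjective on $H_1$ by lifting edge loops to loops in $M$.

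You instead apply Mayer--Vietoris to the open cover $\{M\setminus\Sigma,\ \bigsqcup_i T_i\}$ and read off $m-d+1$ as the rank of the connecting map $\partial\colon H_1(M)\to\tilde H_0(A\cap B)$. This route is more self-contained. It needs no explicit collapse map and no hand verification of surjectivity. It also gives $m\geqslant d-1$ automatically, since the rank is nonnegative, so graph connectivity is not needed. The paper's geometric picture, in turn, makes the tree structure when $b_1(M)=0$ transparent.

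Your secondary injectivity remark is essentially the dual of the paper's lifting argument. With real coefficients you should phrase it through algebraic intersection numbers with the $\Sigma_i$, cooriented by $\nabla V$, rather than through an odd number of crossings. A graph cycle $\sum a_e e$ lifts to a loop whose intersection number with $\Sigma_i$ is $\pm a_{e_i}$, which gives injectivity of $H_1(G;\mathbb R)\to H_1(M;\mathbb R)$.

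You also prove the regularity input that the paper takes for granted: $\nabla V\neq0$ on $\Sigma$, and $\Sigma$ meets $\partial M$ orthogonally. The spectral consequences via Proposition~\ref{prop:neumann-nodal-bound} are handled exactly as in the paper.
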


\begin{proof}
Following the construction of the \emph{nesting graph} in~\cite{sarnak2019topologies}, we consider the graph $G$ whose vertices correspond to the nodal domains and whose edges correspond to the components of $\Sigma$. Each component of $\Sigma$ is connected and two-sided, with its two sides contained in a positive and a negative nodal domain. It therefore determines an edge joining these domains.

The graph is connected and satisfies
\[
b_1(G)=m(V)-d(V)+1.
\]
Choose disjoint tubular neighborhoods of the components of $\Sigma$, compatible with $\partial M$, i.e. transverse intervals through boundary points remain in the boundary. Collapsing the complementary regions to vertices and the normal intervals to edges gives a continuous map $M\to G$. The induced map on first homology is surjective: every edge loop can be lifted by crossing the corresponding hypersurfaces and joining consecutive crossings inside the appropriate nodal domains. Hence $b_1(G)\leqslant b_1(M)$, proving \eqref{eq:zero-set-topological-bound}.

If $H^1(M;\mathbb R)=0$, then $G$ is a tree and $m(V)=d(V)-1$. The remaining conclusions follow from Proposition~\ref{prop:neumann-nodal-bound} (see also Remark~\ref{rem:negative-mean-curvature}).
\end{proof}

In the study of static manifolds with boundary, it is also natural to ask under what conditions the zero set of a static potential, when nonempty, intersects the boundary. We obtain the following result.

\begin{proposition}\label{prop:no-interior-nodal-domains}
Let $(M,g,V)$ be a compact connected static manifold with boundary. Assume that $\lambda_1^D(M,g) \geqslant \dfrac{R_g}{n-1}$, a condition that is automatically satisfied when $R_g\leqslant 0$. Then every nodal domain $\Omega$ satisfies $\overline\Omega\cap\partial M\neq\varnothing$. If, in addition, $M$ is diffeomorphic to the closed ball $B^n$, then every component of $\Sigma=V^{-1}(0)$ meets $\partial M$. 
\end{proposition}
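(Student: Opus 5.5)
We argue by contradiction, using domain monotonicity of Dirichlet eigenvalues. Suppose some nodal domain $\Omega$ has $\overline\Omega\cap\partial M=\varnothing$, so that $\overline\Omega\subset\operatorname{Int}(M)$. By the last item of the notation section, $|V|$ restricted to $\Omega$ is a positive Dirichlet eigenfunction on $\Omega$. Hence
\[
\lambda_1^D(\Omega,g)=\dfrac{R_g}{n-1}.
\]
Since $V$ is a nontrivial static potential, it changes sign whenever it vanishes. Therefore $M\setminus\overline\Omega$ contains other nodal domains and has nonempty interior.

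Extending a first Dirichlet eigenfunction of $\Omega$ by zero gives an admissible test function on $M$. This yields $\lambda_1^D(M,g)\leqslant\lambda_1^D(\Omega,g)$. The inequality is strict: otherwise the extension would be a first Dirichlet eigenfunction on $M$ vanishing on an open set. That contradicts unique continuation, or alternatively the positivity of first Dirichlet eigenfunctions on connected $M$. Thus $\lambda_1^D(M,g)<\dfrac{R_g}{n-1}$, contradicting the hypothesis.

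For the remark that the hypothesis is automatic when $R_g\leqslant0$: in that case $\dfrac{R_g}{n-1}\leqslant0<\lambda_1^D(M,g)$. Alternatively, a positive Dirichlet eigenfunction with nonpositive eigenvalue cannot exist on $\Omega$, by integrating $-\Delta_g|V|=\dfrac{R_g}{n-1}|V|$ against $|V|$.

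For the second claim, assume $M\cong B^n$, so $H^1(M;\mathbb R)=0$. Proposition~\ref{prop:zero-set-components} and its proof show that the nesting graph $G$ is a tree. Let $S$ be a component of $\Sigma$, and suppose $S\cap\partial M=\varnothing$. Then $S$ is a closed embedded hypersurface in $\operatorname{Int}(M)$, since $\nabla V\neq0$ on $\Sigma$ for static potentials. Because $M$ is a ball and $H_{n-1}$ of the interior vanishes, $S$ separates $M$; by Alexander duality there is one bounded side. So $M\setminus S$ has two components, and exactly one of them, call it $U$, has closure disjoint from $\partial M$. Inside $U$ choose a nodal domain adjacent to $S$. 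If $U$ contains further components of $\Sigma$, pass to an innermost one: by the tree structure there is a leaf of $G$ on the side of $S$ away from $\partial M$. That leaf is a nodal domain $\Omega\subset U$ whose closure avoids $\partial M$. This contradicts the first part.

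The main obstacle is this topological step. One must justify that a closed interior hypersurface in a ball bounds a region not touching $\partial M$. One must also check that some nodal domain lies entirely in that region, which a leaf or innermost-component argument in the tree $G$ provides. The analytic part is routine.
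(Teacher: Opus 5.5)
Your proof is correct and follows the paper's route: strict domain monotonicity applied to the positive Dirichlet eigenfunction $|V|$ on a nodal domain with $\overline\Omega\subset\operatorname{Int}(M)$, followed by Jordan--Brouwer separation in the ball. The nesting-graph/leaf step is unnecessary, and so is the ``obstacle'' you flag: any nodal domain meeting $U\setminus\Sigma$ is connected and disjoint from your component $S$, hence lies in $U$ with closure in $\overline U\subset\operatorname{Int}(M)$, which is exactly how the paper concludes.
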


\begin{proof}
Suppose that $\overline\Omega\subset \operatorname{Int}(M)$. Its boundary consists of smooth components of $\Sigma$, and $|V|$ is a positive Dirichlet eigenfunction on $\Omega$. Therefore $\lambda_1^D(\Omega,g)=\dfrac{R_g}{n-1}$. Strict domain monotonicity gives $\lambda_1^D(M,g)<\dfrac{R_g}{n-1}$, a contradiction.

Now suppose $M\cong B^n$ and a component $\Sigma_0$ of $\Sigma$ is disjoint from $\partial M$. The Jordan--Brouwer separation theorem provides an enclosed
region $U$ with $\partial U=\Sigma_0$ and $\overline U\subset \operatorname{Int}(M)$. A nodal domain containing any point of $U\setminus\Sigma$ must remain in $U$, since it cannot cross $\Sigma_0$. Its closure is therefore contained in $\operatorname{Int}(M)$, contradicting the first assertion.
\end{proof}

\begin{remark}
Corollary~4.5 in~\cite{medvedev2026static} states that if the space of static potentials on $(M,g)$ has dimension greater than one and one connected component of $\Sigma$ intersects the boundary, then every other connected component of $\Sigma$ also intersects the boundary. By contrast, Proposition~\ref{prop:no-interior-nodal-domains} yields the same conclusion without any assumption on the dimension of the space of static potentials, provided that $M$ is diffeomorphic to the closed ball $B^n$ and $\lambda_1^D(M,g)\geqslant \dfrac{R_g}{n-1}$.
\end{remark}

Proposition~\eqref{prop:no-interior-nodal-domains} naturally raises the following question.

\medskip

\textit{Open question. Is every compact static three-manifold with boundary diffeomorphic to \(\overline{B^3}\) isometric to a geodesic ball in a simply connected space form?}

\medskip

\begin{remark}\label{rem:static-ball-rigidity}
The assertion that every compact static manifold with boundary diffeomorphic to a closed ball is isometric to a geodesic ball in a simply connected space form is false in dimensions
$4\leqslant n\leqslant12$. Indeed, the constructions of B\"ohm \cite{Bohm1998}, Nienhaus--Wink \cite{NienhausWink2025}, Buttsworth--Hodgkinson \cite{ButtsworthHodgkinson2026}, and Wang \cite{Wang2026} provide nonround Einstein metrics on $\mathbb S^{n+1}$ of the form
\[
h=dt^2+a(t)^2g_{\mathbb S^2}+b(t)^2g_{\mathbb S^{n-2}},\qquad \operatorname{Ric}_h=\Lambda h,\qquad \Lambda>0,
\]
with smooth collapses of the two spherical factors at opposite endpoints of the interval. These metrics yield static manifolds with boundary by reversing the construction of the \textit{associated singular Einstein manifold} (see e.g.~\cite{ambrozio2017static,cruz2023static,medvedev2026dimension}).
\[
g=dt^2+a(t)^2d\theta^2+b(t)^2g_{\mathbb S^{n-2}},\qquad V=a(t)\cos\theta,\qquad 0\leqslant\theta\leqslant\pi.
\]
After including the smooth endpoint collapses, the resulting manifold is diffeomorphic to $\overline{B^n}$. The interior static equation follows from the fact that $g+V^2d\varphi^2$ is locally isometric to $h$ wherever $V\neq0$, and extends across the nodal set by smoothness. Moreover, the boundary is totally geodesic and $\partial_\nu V=0$, so the full static boundary condition holds. These examples satisfy $R_g=(n-1)\Lambda>0$, and their potentials change sign. Their metrics do not have constant sectional curvature, since otherwise the associated Einstein metrics would also have constant sectional curvature.

For $n\geqslant13$, the above constructions do not currently provide counterexamples, and, to the best of our knowledge, the general rigidity question remains unresolved. The numerical evidence discussed in \cite{Wang2026} suggests that the corresponding nonround doubly warped Einstein metrics on $\mathbb S^{n+1}$ cease to exist for $n\geqslant13$. This is not a nonexistence theorem and does not exclude other constructions of static metrics on balls.

On the other hand, applying the local warped-product analysis underlying the Kobayashi--Lafontaine classification \cite{kobayashi1982differential,lafontaine1983geometrie}, together with the static boundary condition and the ball topology, yields the rigidity assertion whenever $g$ is locally conformally flat, in every dimension $n\geqslant3$. No sign assumption on $R_g$ or $V$ is needed.
\end{remark}

Combining the preceding results with~\cite[Theorem~1.11]{medvedev2025some}, we obtain the following.

\begin{corollary}\label{cor:spectral-nodal-area}
Let $(M^3,g,V)$ be a compact connected orientable static manifold with nonempty boundary, $R_g>0$, and $H^1(M;\mathbb R)=0$. Suppose that $\lambda_3^N(M,g)>\dfrac{R_g+\mathcal C_g(V)}{2}$, and that at least one of the following conditions holds:
\[
\lambda_1^D(M,g)\leqslant\dfrac{R_g}{2},\qquad\text{or}\qquad H_g\geqslant0.
\]
Then $V$ changes sign, has exactly two nodal domains, and $\Sigma=V^{-1}(0)$ is connected. Moreover:
\begin{enumerate}
\item If $H_g=0$ and $\Sigma\cap\partial M\neq\varnothing$, then $\Sigma$ is a totally geodesic free boundary disk and
\[
\operatorname{Vol}_g(\Sigma)<\dfrac{12\pi}{R_g}.
\]

\item
If $H_g\geqslant0$ and $\Sigma\subset\operatorname{Int}(M)$, then $\Sigma$ is a totally geodesic two-sphere and
\[
\operatorname{Vol}_g(\Sigma)<\dfrac{24\pi}{R_g}.
\]
\end{enumerate}
If, in addition, $\partial M$ is connected and $\lambda_1^D(M,g)\geqslant R_g/2$, then $\Sigma\cap\partial M\neq\varnothing$.
\end{corollary}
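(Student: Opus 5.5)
The proof has two parts: first that $V$ changes sign, then the nodal and area conclusions. I will establish the sign change separately under each of the two alternative hypotheses, then apply Corollary~\ref{cor:neumann-sign-nodal-criteria} and Proposition~\ref{prop:zero-set-components} to get the nodal structure, and finally quote \cite[Theorem~1.11]{medvedev2025some} for the area bounds.

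\emph{Sign change.} Under $\lambda_1^D(M,g)\leqslant R_g/2$ (here $n=3$), Corollary~\ref{cor:dirichlet-sign-change} applies directly because $R_g>0$. Under $H_g\geqslant0$, suppose $V$ is nowhere vanishing, say $V>0$. Integrate $-\Delta_gV=\frac{R_g}{2}V$ over $M$ and use $\partial_\nu V=\frac{H_g}{2}V$ on $\partial M$:
\[
\frac{R_g}{2}\int_M V\,d\mu_g=-\int_{\partial M}\frac{H_g}{2}V\,d\sigma_g\leqslant0 .
\]
This contradicts $R_g>0$ and $V>0$. So $V$ vanishes somewhere, and since every zero of a nontrivial static potential is a sign change, $V$ changes sign.

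\emph{Nodal structure.} Since $V$ changes sign, $d(V)\geqslant2$. The hypothesis $\lambda_3^N(M,g)>\frac{R_g+\mathcal C_g(V)}{2}$ together with Proposition~\ref{prop:neumann-nodal-bound} rules out $d(V)\geqslant3$, so $d(V)=2$. Because $H^1(M;\mathbb R)=0$, Proposition~\ref{prop:zero-set-components} gives $m(V)=d(V)-1=1$, so $\Sigma$ is connected. Alternatively, this whole step is part (2) of Corollary~\ref{cor:neumann-sign-nodal-criteria}.

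\emph{Area bounds and the last claim.} With $\Sigma$ connected, $M$ orientable of dimension $3$, and $R_g>0$, the hypotheses of \cite[Theorem~1.11]{medvedev2025some} should hold. It supplies the geometric dichotomy: in case $H_g=0$ with $\Sigma$ meeting $\partial M$, a totally geodesic free boundary disk with $\operatorname{Vol}_g(\Sigma)<12\pi/R_g$; in case $H_g\geqslant0$ with $\Sigma\subset\operatorname{Int}(M)$, a totally geodesic two-sphere with area $<24\pi/R_g$. Total geodesy is standard in either case: $V=0$ on $\Sigma$ gives $\operatorname{Hess}_gV=0$ there, and $\nabla V\neq0$ on $\Sigma$. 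The main obstacle is checking that the hypotheses of that external theorem, in particular connectedness of $\partial M$ or the free boundary orthogonality, are exactly matched. I would cite \cite[Proposition~1]{cruz2023static} for orthogonality along $\Sigma\cap\partial M$.

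For the final assertion, $\lambda_1^D(M,g)\geqslant R_g/2$ and Proposition~\ref{prop:no-interior-nodal-domains} show that every nodal domain $\Omega$ has $\overline\Omega\cap\partial M\neq\varnothing$. Suppose instead $\Sigma\subset\operatorname{Int}(M)$. Since $d(V)=2$ and $\Sigma$ is connected, $\partial M$ is connected, and $V\neq0$ on $\partial M$, the boundary $\partial M$ lies entirely in the closure of a single nodal domain. The other nodal domain then has closure inside $\operatorname{Int}(M)$, which contradicts Proposition~\ref{prop:no-interior-nodal-domains}. Hence $\Sigma\cap\partial M\neq\varnothing$.
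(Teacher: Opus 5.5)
Your arguments for the sign change, for $d(V)=2$ and connectedness of $\Sigma$, and for the final assertion are correct and follow the paper. The only difference is minor: for the alternative $\lambda_1^D(M,g)\leqslant R_g/2$ you invoke Corollary~\ref{cor:dirichlet-sign-change}, whereas the paper argues directly with a positive first Dirichlet eigenfunction, Green's identity and the Hopf lemma. Either route works.

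The gap is the strict inequality in part~(1). You take $\operatorname{Vol}_g(\Sigma)<12\pi/R_g$ directly from \cite[Theorem~1.11]{medvedev2025some}. That result gives only $\operatorname{Vol}_g(\Sigma)\leqslant 12\pi/R_g$, with equality only for the round hemisphere of sectional curvature $R_g/6$. No argument that ignores the spectral gap can do better. On that hemisphere, take $V$ to be a linear coordinate function whose nodal great sphere is orthogonal to the equator. Then every hypothesis of the corollary except the spectral gap holds: $M$ is a ball, $H_g=0$, $\lambda_1^D=R_g/2$, and $\Sigma$ is a connected free boundary totally geodesic disk. Yet $\operatorname{Vol}_g(\Sigma)=12\pi/R_g$ exactly.

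The missing step is to exclude this equality case using $\lambda_3^N(M,g)>\frac{R_g+\mathcal C_g(V)}{2}$. On the hemisphere, $H_g=0$ gives $\mathcal C_g(V)=0$. The three linear functions that are even across the equator are Neumann eigenfunctions, so $\lambda_2^N=\lambda_3^N=\lambda_4^N=R_g/2$, and the gap fails. This is how the paper obtains strictness. In your proposal the spectral gap is used only to force $d(V)\leqslant2$, so as written part~(1) is proved only with $\leqslant$.
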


\begin{proof}
Suppose first that $V$ does not change sign. Replacing $V$ by $-V$ if necessary, we have $V>0$ on $M$. If $\phi$ is a positive first Dirichlet eigenfunction, Green's identity and the Hopf boundary lemma give
\[
\left(\lambda_1^D(M,g)-\dfrac{R_g}{2}\right)\int_M V\phi\,d\mu_g=-\int_{\partial M}V\,\partial_\nu\phi\,d\sigma_g>0.
\]
Thus the assumption $\lambda_1^D(M,g)\leqslant\dfrac{R_g}{2}$ forces $V$ to change sign. The alternative assumption $H_g\geqslant0$ yields the same conclusion, since integration of the first equation of~\eqref{eq:static-boundary-identities} gives
\[
R_g\int_M V\,d\mu_g+\int_{\partial M}H_gV\,d\sigma_g=0,
\]
which is impossible when $V>0$.

By Proposition~\ref{prop:neumann-nodal-bound},
\[
\lambda_{d(V)}^N(M,g)\leqslant\dfrac{R_g+\mathcal C_g(V)}{2}.
\]
By assumption $\lambda_3^N(M,g)>\dfrac{R_g+\mathcal C_g(V)}{2}$, therefore $d(V)\leqslant2$. Since $V$ changes sign, $d(V)=2$. The assumption $H^1(M;\mathbb R)=0$ implies that every connected component of the properly embedded, two-sided hypersurface $\Sigma$ separates $M$. Consequently, the number of connected components of $\Sigma$ is $d(V)-1=1$. Applying \cite[Theorem~1.11]{medvedev2025some} after rescaling yields the stated topological conclusions and area bounds. In the first case, equality in the corresponding non-strict bound would make $(M,g)$ a round hemisphere of sectional curvature $R_g/6$. However, on that hemisphere, $\lambda_3^N(M,g)=\dfrac{R_g}{2}$ contrary to the assumed gap, since $\mathcal C_g(V)=0$. Thus the first area bound is also strict.

Finally, suppose that $\partial M$ is connected and $\Sigma\subset\operatorname{Int}(M)$. One of the two nodal domains, say $\Omega$, then satisfies $\overline{\Omega}\subset\operatorname{Int}(M)$. Since $V$ has a fixed sign on $\Omega$ and vanishes on its boundary, $\lambda_1^D(\Omega,g)=\dfrac{R_g}{2}$. Strict domain monotonicity gives
\[
\lambda_1^D(M,g)<\lambda_1^D(\Omega,g)=\dfrac{R_g}{2},
\]
proving the final assertion.
\end{proof}

The following example illustrates the preceding corollary.

\begin{example}[The Nariai static manifold with boundary]\label{ex:nariai-spectral-nodal-area}
Let $a>0$ and consider
\[
M=\left[-\dfrac{\pi a}{2},\dfrac{\pi a}{2}\right]\times\mathbb S^2,\qquad g=dt^2+a^2g_{\mathbb S^2},\qquad V(t,x)=\sin(t/a),
\]
where $g_{\mathbb S^2}$ is the unit round metric. This is a compact connected orientable static manifold with boundary, and
\[
R_g=\dfrac{2}{a^2},\qquad H_g=0, \qquad \partial_\nu V=0,\qquad H^1(M;\mathbb R)=0.
\]
In particular, $\mathcal C_g(V)=0$.

By separation of variables, the Neumann eigenvalues are
\[
\dfrac{k^2+\ell(\ell+1)}{a^2},\qquad k,\ell\in\{0,1,2,\ldots\},
\]
with multiplicities inherited from the spherical harmonics.
Consequently,
\[
\lambda_1^N(M,g)=0,\qquad \lambda_2^N(M,g)=\dfrac{1}{a^2}=\dfrac{R_g}{2},\qquad\lambda_3^N(M,g)=\dfrac{2}{a^2}=R_g.
\]
The first Dirichlet eigenfunction is $\cos(t/a)$, so
\[
\lambda_1^D(M,g)=\dfrac{1}{a^2}=\dfrac{R_g}{2}.
\]
Thus both alternative assumptions in Corollary~\ref{cor:spectral-nodal-area} hold, and the spectral gap condition reads
\[
\lambda_3^N(M,g)=R_g>\dfrac{R_g+\mathcal C_g(V)}{2}=\dfrac{R_g}{2}.
\]

The potential has exactly two nodal domains, and its zero set is the connected totally geodesic sphere
\[
\Sigma=\{0\}\times\mathbb S^2\subset\operatorname{Int}(M).
\]
Its area is
\[
\operatorname{Vol}_g(\Sigma)=4\pi a^2=\dfrac{8\pi}{R_g}<\dfrac{24\pi}{R_g},
\]
which verifies the conclusion in part~(2).

Finally, although $\lambda_1^D(M,g)=R_g/2$, the nodal set does not meet the boundary. Since $\partial M$ has two connected components, this example shows that the connectedness assumption on $\partial M$ in the final assertion cannot be omitted.
\end{example}

As an immediate consequence of Corollary~\ref{cor:spectral-nodal-area}, we obtain the following.

\begin{corollary}\label{cor:spectral-nodal-disk}
Let $(M^3,g,V)$ be a compact connected orientable static manifold with connected nonempty boundary, $R_g>0$, $H_g=0$, and $H^1(M;\mathbb R)=0$.
If
\[
\lambda_1^D(M,g)=\dfrac{R_g}{2}<\lambda_3^N(M,g),
\]
then $V$ changes sign and its zero set is a connected totally geodesic free boundary disk satisfying
\[
\operatorname{Vol}_g(\Sigma)<\dfrac{12\pi}{R_g}.
\]
\end{corollary}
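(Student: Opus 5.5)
The plan is to reduce Corollary~\ref{cor:spectral-nodal-disk} to Corollary~\ref{cor:spectral-nodal-area} and Proposition~\ref{prop:no-interior-nodal-domains}.

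\textbf{Step 1: the hypotheses of Corollary~\ref{cor:spectral-nodal-area} hold.} Since $H_g=0$, the boundary correction term vanishes, $\mathcal C_g(V)=0$. The assumed gap $\lambda_3^N(M,g)>R_g/2$ is therefore exactly the condition $\lambda_3^N(M,g)>\bigl(R_g+\mathcal C_g(V)\bigr)/2$. Both alternative conditions hold: $\lambda_1^D(M,g)=R_g/2\leqslant R_g/2$, and also $H_g=0\geqslant0$. The remaining hypotheses ($n=3$, orientable, $R_g>0$, $H^1(M;\mathbb R)=0$) are assumed directly.

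\textbf{Step 2: conclusions from Corollary~\ref{cor:spectral-nodal-area}.} It gives that $V$ changes sign, has exactly two nodal domains, and that $\Sigma$ is connected.

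\textbf{Step 3: $\Sigma$ meets the boundary.} The final assertion of Corollary~\ref{cor:spectral-nodal-area} applies, since $\partial M$ is connected and $\lambda_1^D(M,g)=R_g/2\geqslant R_g/2$. Hence $\Sigma\cap\partial M\neq\varnothing$.

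It is worth checking that this step is sound at equality. If $\Sigma\subset\operatorname{Int}(M)$, connectedness of $\partial M$ forces one nodal domain $\Omega$ to satisfy $\overline\Omega\subset\operatorname{Int}(M)$. Then $\lambda_1^D(\Omega,g)=R_g/2$. Strict domain monotonicity gives $\lambda_1^D(M,g)<R_g/2$, contradicting the equality. So no strictness is lost.

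Proposition~\ref{prop:no-interior-nodal-domains} gives a complementary route to the same point: it shows every nodal domain touches $\partial M$. The Nariai example explains why connectedness of $\partial M$ is genuinely needed here.

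\textbf{Step 4: conclude.} We now have $H_g=0$ and $\Sigma\cap\partial M\neq\varnothing$, so part~(1) of Corollary~\ref{cor:spectral-nodal-area} applies. It yields that $\Sigma$ is a totally geodesic free boundary disk with $\operatorname{Vol}_g(\Sigma)<12\pi/R_g$. Strictness is inherited from that corollary, where the round hemisphere case is excluded by the $\lambda_3^N$ gap.

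The only delicate point is Step~3, namely that the non-strict inequality $\lambda_1^D(M,g)\geqslant R_g/2$ suffices. It does, because the contradiction comes from strict domain monotonicity.
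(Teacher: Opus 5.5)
Your proposal is correct and follows the paper's route. The paper presents this result as an immediate consequence of Corollary~\ref{cor:spectral-nodal-area}: note $\mathcal C_g(V)=0$ when $H_g=0$, use that corollary's final assertion (with $\partial M$ connected and $\lambda_1^D(M,g)=R_g/2$) to get $\Sigma\cap\partial M\neq\varnothing$, then apply part~(1), and your extra check via strict domain monotonicity that the equality case $\lambda_1^D(M,g)=R_g/2$ suffices is sound.
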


\begin{remark}\label{rem:closed-spectral-nodal-area}
An analogous statement holds for \textit{closed static manifolds}. Namely, let $(M^3,g,V)$ be a closed connected orientable static manifold
with $R_g>0$ and $H^1(M;\mathbb R)=0$.
Write the eigenvalues of $-\Delta_g$, counted with multiplicity, as
\[
0=\lambda_1(M,g)<\lambda_2(M,g)\leqslant\lambda_3(M,g)\leqslant\cdots\nearrow+\infty.
\]
If $\lambda_3(M,g)>\dfrac{R_g}{2}$ then $V$ has exactly two nodal domains, and $\Sigma=V^{-1}(0)$ is a connected totally geodesic two-sphere with
\[
\operatorname{Vol}_g(\Sigma)<\dfrac{24\pi}{R_g}.
\]

\begin{proof}
The assumptions imply that $V$ is an eigenfunction with eigenvalue $\lambda_2(M,g)=\dfrac{R_g}{2}$. Courant's nodal domain theorem imply that $V$ has exactly two nodal domains. As above, $H^1(M;\mathbb R)=0$ implies that $\Sigma$ is connected. Shen's theorem~\cite{shen1997note}, gives the spherical topology and the non-strict area bound. Equality would force $(M,g)$ to be a round sphere, for which $\lambda_3(M,g)=R_g/2$, contradicting the spectral gap.
\end{proof}

\end{remark}

The previous results show that, once the Dirichlet and Neumann eigenvalues of a static manifold with boundary are known, one can draw conclusions about the zero set of its static potentials. A natural next step is therefore to obtain estimates for the Dirichlet and Neumann eigenvalues of static manifolds with boundary in terms of some geometric quantities. We begin this program with the following result, which shows that, for \(R_g>0\), a boundary layer of fixed thickness and sufficiently small volume relative to the interior forces \(V\) to change sign. 

\begin{proposition}\label{prop:cheeger-boundary-layer-sign-change}
Let $(M,g,V)$ be a compact connected static manifold with boundary such that $R_g>0$. For $\rho>0$, set
\[
M_\rho=\{x\in M:\operatorname{dist}_g(x,\partial M)\geqslant\rho\}.
\]
Then
\begin{equation}\label{eq:cheeger-boundary-layer-upper}
\lambda_1^D(M,g)\leqslant\dfrac{\operatorname{Vol}_g(M\setminus M_\rho)}{\rho^2\operatorname{Vol}_g(M_\rho)}.
\end{equation}
Consequently,
\begin{equation}\label{eq:cheeger-boundary-layer-sign-change}
\dfrac{\operatorname{Vol}_g(M\setminus M_\rho)}{\rho^2\operatorname{Vol}_g(M_\rho)}\leqslant\dfrac{R_g}{n-1}\quad\Longrightarrow\quad V\text{ changes sign}.
\end{equation}
\end{proposition}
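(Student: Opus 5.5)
The plan is to bound $\lambda_1^D(M,g)$ by the Rayleigh quotient of an explicit Lipschitz test function built from the distance to the boundary, and then to read off the sign-change criterion from Corollary~\ref{cor:dirichlet-sign-change}. Let $r(x)=\operatorname{dist}_g(x,\partial M)$ and set
\[
\varphi(x)=\min\left\{\dfrac{r(x)}{\rho},1\right\}.
\]

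\textbf{Admissibility.} The function $r$ is $1$-Lipschitz on $M$, so $\varphi$ is Lipschitz with $|\nabla^g\varphi|_g\leqslant 1/\rho$ almost everywhere. Moreover, $\nabla^g\varphi=0$ almost everywhere on $M_\rho$, where $\varphi\equiv1$. Since $\varphi$ vanishes on $\partial M$, it lies in $H^1_0(M)$ and is admissible in the Dirichlet min--max principle. Here one uses Rademacher's theorem, and the standard fact that a Lipschitz function vanishing on a smooth boundary lies in $H^1_0$.

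\textbf{The estimate.} For the numerator,
\[
\int_M|\nabla^g\varphi|_g^2\,d\mu_g\leqslant\dfrac{\operatorname{Vol}_g(M\setminus M_\rho)}{\rho^2}.
\]
For the denominator, $\varphi\geqslant0$ gives
\[
\int_M\varphi^2\,d\mu_g\geqslant\int_{M_\rho}1\,d\mu_g=\operatorname{Vol}_g(M_\rho).
\]
Dividing yields \eqref{eq:cheeger-boundary-layer-upper}. If $M_\rho$ has zero volume the right-hand side is $+\infty$ and there is nothing to prove, so I would implicitly assume $\operatorname{Vol}_g(M_\rho)>0$. The remaining checks are only that the gradient estimate holds almost everywhere and that the level set $\{r=\rho\}$ contributes nothing, both of which are routine.

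\textbf{Consequence.} If the hypothesis of \eqref{eq:cheeger-boundary-layer-sign-change} holds, then \eqref{eq:cheeger-boundary-layer-upper} gives $\lambda_1^D(M,g)\leqslant R_g/(n-1)$. Since $R_g>0$, Corollary~\ref{cor:dirichlet-sign-change} applies, and $V$ changes sign.

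The main (mild) obstacle is the regularity of $r$: it is not smooth beyond the cut locus of $\partial M$. This causes no difficulty, since only the Lipschitz property is used.
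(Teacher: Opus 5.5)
Your proposal is correct and matches the paper's proof: the same Lipschitz test function $\min\{\operatorname{dist}_g(\cdot,\partial M)/\rho,1\}\in H^1_0(M)$, the same numerator and denominator bounds, and the same appeal to Corollary~\ref{cor:dirichlet-sign-change} for the sign change. Your remarks on the a.e.\ vanishing of the gradient on $\{r=\rho\}$ and on the degenerate case $\operatorname{Vol}_g(M_\rho)=0$ are details the paper leaves implicit.
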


\begin{proof}
The Lipschitz function
\[
u_\rho(x)=\min\left\{\dfrac{\operatorname{dist}_g(x,\partial M)}{\rho},1\right\}
\]
belongs to $H_0^1(M,d\mu_g)$ and equals $1$ on $M_\rho$. Moreover,
\[
\int_M|\nabla^gu_\rho|_g^2\,d\mu_g\leqslant\dfrac{\operatorname{Vol}_g(M\setminus M_\rho)}{\rho^2},\qquad\int_Mu_\rho^2\,d\mu_g\geqslant\operatorname{Vol}_g(M_\rho).
\]
Its Rayleigh quotient proves \eqref{eq:cheeger-boundary-layer-upper}. The sign-change assertion follows from Proposition~\ref{cor:dirichlet-sign-change}.
\end{proof}

The construction of the test function is inspired by~\cite[Section~4.1]{harrell2021complementary}.

\subsection{Spectral criticality of Robin static triples}\label{sec2}

Following~\cite{medvedev2025some}, a \emph{Robin static triple} $(M,g,V)$ has a positive interior potential satisfying the static equation, with Dirichlet and Robin boundary parts $D$ and $S$:
\begin{equation}\label{eq:robin-static-system}
\begin{cases}
\operatorname{Hess}_gV-(\Delta_gV)g-V\operatorname{Ric}_g=0 & \text{in }\operatorname{Int}(M),\\
V=0 & \text{on }D,\\
(\partial_\nu V)\gamma-VB_g=0 & \text{on }S.
\end{cases}
\end{equation}
See \cite[Definition~1.7]{medvedev2025some}. Here we also allow the extension of this definition in which $D$ and $S$ are compact smooth boundary faces meeting orthogonally along their common edge. We assume that $g$ and $V$ are smooth up to the faces and their intersection, and that $V>0$ on $M\setminus D$. The case $S=\varnothing$ is a static triple (see~\cite{ambrozio2017static}).

The local arguments underlying the preliminary properties apply to these triples as well. In particular, $R_g$ is constant, each component of $D$ is totally geodesic, and $|\nabla^gV|_g$ is a positive constant on each such component. Indeed, $V=dV=0$ at a point would force $V$ to vanish along geodesics entering the interior, while $\operatorname{Hess}_gV=0$ on $D$ gives constancy of $|\nabla^gV|_g$ there (see e.g. the arguments in the proof of~\cite[Proposition~1]{cruz2023static}). Also relying on the proof of~\cite[Proposition~1]{cruz2023static}, we show that the mean curvature is constant on each Robin face. To see this directly, the boundary equations give, for $Y$ tangent to $S$,
\[
V\operatorname{Ric}_g(Y,\nu)=\operatorname{Hess}_gV(Y,\nu)=\dfrac{V}{n-1}Y(H_g).
\]
The contracted Codazzi equation gives
\[
\operatorname{Ric}_g(Y,\nu)=(\operatorname{div}_S B_g)(Y)-Y(H_g)=-\dfrac{n-2}{n-1}Y(H_g).
\]
Thus $Y(H_g)=0$ wherever $V>0$, and the conclusion extends to the edge by continuity.

For a fixed decomposition $\partial M=D\cup S$, define
\begin{equation}\label{eq:mixed-rayleigh-quotient}
\lambda_1^{D,R}\left(M,\widetilde g;\dfrac{H_g}{n-1}\right)=\inf_{\substack{w\in H^1(M)\setminus\{0\}\\\operatorname{Tr}_D w=0}}\dfrac{\displaystyle\int_M|\nabla^{\widetilde g}w|_{\widetilde g}^2\,d\mu_{\widetilde g}-\dfrac1{n-1}\int_S H_gw^2\,d\sigma_{\widetilde g}}{\displaystyle\int_Mw^2\,d\mu_{\widetilde g}}.
\end{equation}
Here $H_g$ is the mean curvature for the initial metric $g$ and is held fixed when $\widetilde g$ varies; it is not replaced by $H_{\widetilde g}$.

Notice that 
\begin{equation}\label{eq:robin-static-first-eigenvalue}
\lambda_1^{D,R}\left(M,g;\dfrac{H_g}{n-1}\right)=\dfrac{R_g}{n-1}.
\end{equation}
Indeed, set
\[
I=\int_MV^2\,d\mu_g,\qquad u=\dfrac{V}{\sqrt I}.
\]
Then $u$ is a normalized positive eigenfunction of the mixed problem with eigenvalue $R_g/(n-1)$. Orthogonality to a positive first eigenfunction proves \eqref{eq:robin-static-first-eigenvalue}. The first eigenvalue is simple.

\begin{theorem}\label{prop:intrinsic-robin-static-criticality}
Let $(M^n,g,V)$ be a compact connected Robin static triple with $D\neq\varnothing$, and write
\[
D=\Sigma_1\sqcup\cdots\sqcup\Sigma_\ell,\qquad\kappa_j=|\nabla^gV|_g\big|_{\Sigma_j}.
\]

Call a smooth metric variation $g_t$ admissible if
\[
g_0=g,\qquad\operatorname{Vol}_{g_t}(M)=\operatorname{Vol}_g(M),\qquad\dot g_0=\mathcal L_Xg,
\]
for some smooth vector field $X$ tangent to $S$. No tangency condition is imposed along $D$.

The metric $g$ is critical for $\lambda_1^{D,R}(M,\cdot;H_g/(n-1))$ under all admissible variations if and only if $\kappa_1=\cdots=\kappa_\ell$.
In particular, criticality holds when $D$ is connected.
\end{theorem}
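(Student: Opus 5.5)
We compute the first variation of the simple eigenvalue $\lambda(t)=\lambda_1^{D,R}(M,g_t;H_g/(n-1))$ along an admissible variation. Since the eigenvalue is simple, $\lambda(t)$ is differentiable at $t=0$, and Hadamard-type perturbation theory gives $\dot\lambda(0)$ as the derivative of the Rayleigh quotient \eqref{eq:mixed-rayleigh-quotient} at the fixed normalized eigenfunction $u=V/\sqrt I$. Here $h=\dot g_0=\mathcal L_Xg$, and $H_g$ is held fixed. We then use the diffeomorphism structure: $g_t$ is, to first order, $\phi_t^*g$ with $\phi_t$ the flow of $X$. Because $X$ is tangent to $S$, $\phi_t$ preserves $S$. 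However, it does not preserve $D$. The eigenvalue of $\phi_t^*g$ with the pulled-back decomposition equals that of $g$. So the whole variation comes from the motion of the Dirichlet face $D$, i.e. a Hadamard domain variation with normal speed $\langle X,\nu\rangle$ on $D$.

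Concretely, I would write
\[
\dot\lambda(0)=\int_M\Big(-\langle h,du\otimes du\rangle+\tfrac12(\operatorname{tr}h)(|\nabla u|^2-\lambda u^2)\Big)d\mu_g-\frac{1}{2(n-1)}\int_S H_g u^2\operatorname{tr}_\gamma h\,d\sigma_g .
\]
I would then substitute $h=\mathcal L_Xg$ and integrate by parts, using $-\Delta u=\lambda u$ and the Robin condition on $S$. On $S$, the boundary terms cancel precisely because $X$ is tangent, $H_g$ is constant on each Robin face (established above), and $\operatorname{div}_S$ terms integrate to zero up to the edge. Edge contributions along $D\cap S$ are handled using orthogonality of the faces. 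On $D$, $u=0$ and $\nabla u=-(\kappa_j/\sqrt I)\nu$, so only the classical Hadamard term survives:
\[
\dot\lambda(0)=-\frac{1}{I}\sum_{j=1}^\ell\kappa_j^2\int_{\Sigma_j}\langle X,\nu\rangle\,d\sigma_g .
\]
The volume constraint gives $0=\frac{d}{dt}\operatorname{Vol}_{g_t}(M)=\int_M\operatorname{div}X=\int_D\langle X,\nu\rangle$, the $S$-flux vanishing by tangency.

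Criticality means $\dot\lambda(0)=0$ for all admissible variations. The set of achievable normal speeds $f=\langle X,\nu\rangle|_D$ is all smooth $f$ on $D$ with $\int_D f=0$. Such an $f$ is realized by extending $f\nu$ from $D$ into a collar, made tangent to $S$ near the edge, which is possible since the faces meet orthogonally. Volume-preservation to all orders is arranged by composing with a conformal/volume normalization that does not affect the first variation. Then $\dot\lambda(0)=0$ for all mean-zero $f$ iff the function equal to $\kappa_j^2$ on $\Sigma_j$ is constant, i.e. $\kappa_1=\cdots=\kappa_\ell$. For the converse direction, I would take $f$ equal to $c_j$ on $\Sigma_j$ with $\sum c_j|\Sigma_j|=0$ and $\sum\kappa_j^2c_j|\Sigma_j|\neq0$ whenever the $\kappa_j$ differ. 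Connected $D$ gives the particular case trivially.

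The main obstacle is the boundary bookkeeping in the integration by parts. One must check that the Robin term with frozen $H_g$ exactly cancels the $S$-boundary contributions from $\mathcal L_Xg$, including the corner terms on $D\cap S$. The cleanest route is probably to avoid the explicit computation altogether. Instead I would argue via $g_t=\phi_t^*g+O(t^2)$ and diffeomorphism invariance: the pulled-back problem on $\phi_t^{-1}$-transported faces keeps the same frozen Robin coefficient on $S$, since $\phi_t$ maps $S$ to itself and $H_g$ is locally constant there. This reduces everything to the standard Hadamard formula for moving a Dirichlet face orthogonal to a Robin face.
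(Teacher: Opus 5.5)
Your proposal is correct and follows essentially the same route as the paper. Both arguments use the first-variation formula for the simple eigenvalue with $H_g$ frozen, then specialize to $h=\mathcal L_Xg$ and integrate by parts; the paper packages this step through the divergence-free tensor $T=\bigl(|\nabla^gu|_g^2-\frac{R_g}{n-1}u^2\bigr)g-2\,du\otimes du$. The $S$-terms then collapse into $-\frac1{n-1}\int_S\operatorname{div}_S(H_gu^2X)\,d\sigma_g$, the Hadamard term $-\frac1I\sum_j\kappa_j^2\int_{\Sigma_j}f\,d\sigma_g$ survives on $D$, the volume derivative is $\int_Df\,d\sigma_g$, and a volume-normalized variation handles the converse.

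One small correction: the edge term $\int_{D\cap S}H_gu^2\langle X,\eta\rangle$ (with $\eta$ the outward conormal of $S$) vanishes simply because $u=0$ on $D\cap S$, not because of orthogonality. Orthogonality is only needed to make $X=f\nu$ on $D$ compatible with tangency to $S$ at the edge, and the paper avoids even this by taking $X$ supported away from $S$ and the edges.
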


This theorem implies that the associated Einstein manifold to a compact connected Robin static triple $(M^n,g,V)$ with $D\neq\varnothing$ has no singularities if and only if  $g$ is critical for $\lambda_1^{D,R}(M,\cdot;H_g/(n-1))$ under all admissible variations.

\begin{proof}
For an arbitrary smooth metric variation, put
\[
h=\dot g_0,\qquad\lambda(t)=\lambda_1^{D,R}\left(M,g_t;\dfrac{H_g}{n-1}\right).
\]
One has
\begin{equation}\label{eq:mixed-metric-first-variation}
\begin{aligned}
\lambda'(0)={}&-\int_M\langle du\otimes du,h\rangle_g\,d\mu_g+\dfrac12\int_M\left(|\nabla^gu|_g^2-\dfrac{R_g}{n-1}u^2\right)\operatorname{tr}_gh\,d\mu_g\\&-\dfrac1{2(n-1)}\int_S H_gu^2\operatorname{tr}_S h\,d\sigma_g,
\end{aligned}
\end{equation}
where as above $u=\dfrac{V}{\sqrt I}, I=\displaystyle\int_MV^2\,d\mu_g$. Indeed, let $g_t$ be a smooth family of metrics with $g_0=g$ and $h=\left.\dfrac{d}{dt}\right|_{t=0}g_t$. Consider a differentiable eigenbranch satisfying
\[
\begin{cases}
-\Delta_{g_t}u_t=\lambda(t)u_t & \text{in }M,\\
u_t=0 & \text{on }D,\\
\partial_{\nu_t}u_t=\alpha_tu_t & \text{on }S.
\end{cases}
\]
For the computation, we temporarily allow the Robin coefficient $\alpha_t$ to vary. Write
\[
u=u_0,\qquad\dot u=\left.\frac{d}{dt}\right|_{t=0}u_t,\qquad\dot\alpha_0=\left.\frac{d}{dt}\right|_{t=0}\alpha_t.
\]
Integration by parts gives
\begin{equation}\label{eq:mixed-energy-identity}
\int_M |du_t|_{g_t}^2\,d\mu_{g_t}=\lambda(t)\int_Mu_t^2\,d\mu_{g_t}+\int_S\alpha_tu_t^2\,d\sigma_{g_t}.
\end{equation}
Recall the variation formulas, with $u$ held fixed,
\[
\left.\frac{d}{dt}\right|_{t=0}|du|_{g_t}^2=-\langle du\otimes du,h\rangle_g,\qquad\left.\frac{d}{dt}\right|_{t=0}d\mu_{g_t}=\frac12\operatorname{tr}_gh\,d\mu_g,
\]
and
\[
\left.\frac{d}{dt}\right|_{t=0}d\sigma_{g_t}=\frac12\operatorname{tr}_Sh\,d\sigma_g.
\]
Since the Dirichlet portion is fixed, $\dot u=0$ on $D$. Testing the eigenvalue equation at $t=0$ with $\dot u$ yields
\[
\int_M\langle\nabla^gu,\nabla^g\dot u\rangle_g\,d\mu_g=\lambda(0)\int_Mu\dot u\,d\mu_g+\int_S\alpha_0u\dot u\,d\sigma_g.
\]
Consequently, all terms involving $\dot u$ cancel when differentiating \eqref{eq:mixed-energy-identity}, and we obtain
\begin{equation}\label{eq:mixed-two-parameter-first-variation}
\begin{aligned}
\lambda'(0)\int_Mu^2\,d\mu_g&+\int_S\dot\alpha_0u^2\,d\sigma_g={}-\int_M\langle du\otimes du,h\rangle_g\,d\mu_g\\&+\frac12\int_M\bigl(|\nabla^gu|_g^2-\lambda(0)u^2\bigr)\operatorname{tr}_gh\,d\mu_g-\frac12\int_S\alpha_0u^2\operatorname{tr}_Sh\,d\sigma_g.
\end{aligned}
\end{equation}
Taking
\[
\alpha_t\equiv\frac{H_g}{n-1},\qquad\lambda(0)=\frac{R_g}{n-1},\qquad\int_Mu^2\,d\mu_g=1,
\]
gives \eqref{eq:mixed-metric-first-variation}. Here $H_g/(n-1)$ is held fixed at its value for the reference metric $g$.

Further, introduce
\[
T=\left(|\nabla^gu|_g^2-\dfrac{R_g}{n-1}u^2\right)g-2\,du\otimes du.
\]
Since $\Delta_gu=-R_gu/(n-1)$ and $R_g$ is constant, $\operatorname{div}_gT=0$. For $h=\mathcal L_Xg$, integration by parts in \eqref{eq:mixed-metric-first-variation} yields 
\begin{equation}\label{eq:mixed-stress-boundary-formula}
\lambda'(0)=\int_{\partial M}T(X,\nu)\,d\sigma_g-\dfrac1{2(n-1)}\int_S H_gu^2\operatorname{tr}_S(\mathcal L_Xg)\,d\sigma_g.
\end{equation}

Write $f=g(X,\nu)$ on $D$. Since $u=0$ there, 
\[
T(X,\nu)=-f(\partial_\nu u)^2.
\]
On $S$, tangency of $X$ and the Robin condition give 
\[
T(X,\nu)=-\dfrac{2H_g}{n-1}uX(u),\qquad\dfrac12\operatorname{tr}_S(\mathcal L_Xg)=\operatorname{div}_S X.
\]
Because $H_g$ is constant on each Robin face, its contribution to \eqref{eq:mixed-stress-boundary-formula} is
\[
-\dfrac1{n-1}\int_S\operatorname{div}_S(H_gu^2X)\,d\sigma_g=0.
\]
There is no edge contribution, since $u=0$ on $D\cap S$. Consequently,
\begin{equation}\label{eq:intrinsic-static-first-variation}
\lambda'(0)=-\int_D f(\partial_\nu u)^2\,d\sigma_g=-\dfrac1I\sum_{j=1}^{\ell}\kappa_j^2\int_{\Sigma_j}f\,d\sigma_g.
\end{equation}
This identity holds for every metric variation with $\dot g_0=\mathcal L_Xg$ and $X$ tangent to $S$, whether or not the variation preserves volume.

The corresponding volume derivative is
\begin{equation}\label{eq:intrinsic-volume-first-variation}
\left.\dfrac{d}{dt}\right|_{t=0}\operatorname{Vol}_{g_t}(M)=\dfrac12\int_M\operatorname{tr}_g(\mathcal L_Xg)\,d\mu_g=\int_D f\,d\sigma_g.
\end{equation}
If $\kappa_1=\cdots=\kappa_\ell$, equations \eqref{eq:intrinsic-static-first-variation} and \eqref{eq:intrinsic-volume-first-variation} prove stationarity for all admissible variations.

Conversely, fix $i\neq j$ and choose a smooth vector field $X$, supported away from $S$ and the edges, such that
\[
\int_{\Sigma_i}f\,d\sigma_g=1,\qquad\int_{\Sigma_j}f\,d\sigma_g=-1,\qquad f=0\quad\text{on the remaining components}.
\]
Let $h=\mathcal L_Xg$ and $\widehat g_t=g+th$ for small $|t|$. Its first volume variation vanishes. Hence the volume-normalized family
\[
g_t=\left(\dfrac{\operatorname{Vol}_g(M)}{\operatorname{Vol}_{\widehat g_t}(M)}\right)^{2/n}\widehat g_t
\]
is admissible and still satisfies $\dot g_0=h$. Stationarity and \eqref{eq:intrinsic-static-first-variation} give $\kappa_i^2=\kappa_j^2$. Since each $\kappa_j$ is positive, this proves that $\kappa_1=\cdots=\kappa_\ell$.
\end{proof}

\begin{remark}
The admissible variations need not preserve staticity. The proposition concerns the specified Lie-derivative directions, rather than arbitrary volume-preserving metric variations. If $S=\varnothing$, the functional is $\lambda_1^D$; if $H_g=0$ on $S$, it is the first mixed Dirichlet--Neumann eigenvalue.
\end{remark}

As an immediate corollary of Theorem~\ref{prop:intrinsic-robin-static-criticality}, we obtain the following.

\begin{corollary}\label{prop:static-nodal-domain-criticality}
Let $(M,g,V)$ be a compact connected static manifold with boundary, and let $\Omega$ be a nodal domain with nonempty nodal face $D=\overline\Omega\cap\Sigma$. Set
\[
S=\overline{\Omega\cap\partial M},\qquad D=\Sigma_1\sqcup\cdots\sqcup\Sigma_\ell, \qquad \kappa_j=|\nabla^gV|_g\big|_{\Sigma_j}.
\]
Then
\[
\lambda_1^{D,R}\left(\overline\Omega,g;\dfrac{H_g}{n-1}\right)=\dfrac{R_g}{n-1}.
\]

Consider smooth deformations
$\Omega_t=\Phi_t(\Omega)$ induced by diffeomorphisms of $M$ preserving $\partial M$, with $\Phi_0=\operatorname{id}$. Impose Dirichlet conditions on $\Phi_t(D)$ and Robin
conditions
\[
\partial_\nu u=\dfrac{H_g}{n-1}u\quad\text{on }\Phi_t(S),
\]
keeping the ambient metric $g$ fixed.

The domain $\Omega$ is critical for the first mixed eigenvalue under all such volume-preserving deformations if and only if
\[
\kappa_1=\cdots=\kappa_\ell.
\]
In particular, a connected nodal face implies criticality. This includes the pure Dirichlet case $S=\varnothing$ and the mixed Dirichlet--Neumann case $H_g=0$ on $S$.
\end{corollary}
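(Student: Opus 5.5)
The plan is to reduce the corollary to Theorem~\ref{prop:intrinsic-robin-static-criticality} applied to the triple $(\overline\Omega,g|_{\overline\Omega},|V|)$. This requires three steps. First, verify that this triple is a compact connected Robin static triple in the extended sense, with Dirichlet face $D=\overline\Omega\cap\Sigma$ and Robin face $S=\overline{\Omega\cap\partial M}$. Second, identify the eigenvalue. Third, translate domain deformations with fixed ambient metric into metric variations on the fixed manifold $\overline\Omega$ of the admissible form $\dot g_0=\mathcal L_Xg$.

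For the first step, replace $V$ by $-V$ if necessary so that $V>0$ on $\Omega$. The static interior equation is linear in $V$, so it holds on $\operatorname{Int}(\overline\Omega)$. We have $V=0$ on $D$, and the static boundary condition on $S$ is inherited from $\partial M$. Since $V$ is a static potential, $\nabla V\neq 0$ on $\Sigma$, so $D$ is a smooth hypersurface. By the conventions in Section~2, $D$ and $S$ meet orthogonally, which is exactly the extended definition allowed before~\eqref{eq:mixed-rayleigh-quotient}. The identity $\lambda_1^{D,R}(\overline\Omega,g;H_g/(n-1))=R_g/(n-1)$ then follows from~\eqref{eq:robin-static-first-eigenvalue}, since $V$ is a positive mixed eigenfunction.

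For the third step, take $\Phi_t$ with $\Phi_0=\operatorname{id}$ preserving $\partial M$, and let $Y=\dot\Phi_0$. Then $Y$ is tangent to $\partial M$, hence to $S$, and may be arbitrary along $D$. Pulling back by $\Phi_t$, the mixed problem on $\Phi_t(\Omega)$ for $g$ is isometric to the mixed problem on $\overline\Omega$ for $g_t=\Phi_t^*g$. Under this isometry, Dirichlet conditions on $\Phi_t(D)$ correspond to Dirichlet conditions on $D$, and the Robin condition on $\Phi_t(S)$ corresponds to the Robin condition on $S$. Here there is one point to check: the Robin coefficient must pull back to the fixed function $H_g/(n-1)$ in~\eqref{eq:mixed-rayleigh-quotient}. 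This holds because $H_g$ is constant on each component of $\partial M$, and $\Phi_t$ preserves $\partial M$ and hence each of its components. Volume preservation of $\Omega_t$ is equivalent to $\operatorname{Vol}_{g_t}(\overline\Omega)=\operatorname{Vol}_g(\overline\Omega)$, and $\dot g_0=\mathcal L_Yg$. So the deformations of the corollary are admissible variations in the sense of the theorem.

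The converse direction needs more care, and I expect it to be the main obstacle. The theorem's necessity argument uses particular admissible variations, so we must show those can be realized by ambient diffeomorphisms preserving $\partial M$. The vector field $X$ there is supported near $D$, away from $S$ and the edges, with prescribed normal fluxes $\pm1$ on two components $\Sigma_i,\Sigma_j$. It extends smoothly to $M$ with support in the interior, so its flow preserves $\partial M$. The volume normalization used in the theorem is a rescaling of the metric, not a diffeomorphism, so I would replace it here. Since the first volume variation vanishes, compose the flow with a small correcting diffeomorphism: the flow of $s(t)Z$, with $Z$ supported near one component of $D$ and having nonzero flux. Choose $s(t)=O(t^2)$ so that volume is preserved exactly, via the implicit function theorem. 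This leaves $\dot g_0=\mathcal L_Xg$ unchanged. Formula~\eqref{eq:intrinsic-static-first-variation} then gives $\kappa_i^2=\kappa_j^2$, and positivity gives $\kappa_i=\kappa_j$. The special cases follow directly: a connected $D$ makes the condition vacuous, and the cases $S=\varnothing$ and $H_g=0$ on $S$ are included in the theorem.
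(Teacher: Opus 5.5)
Your proposal is correct and follows the paper's intended route. The paper gives no separate argument and simply calls this an immediate consequence of Theorem~\ref{prop:intrinsic-robin-static-criticality}; your reduction via $g_t=\Phi_t^*g$ on $\overline\Omega$, with $\dot g_0=\mathcal L_Yg$ and $Y$ tangent to $\partial M$, is exactly that reduction. You also supply two details the paper leaves implicit: that $H_g\circ\Phi_t=H_g$ on $S$ because $H_g$ is locally constant on $\partial M$, and that for necessity the theorem's volume rescaling must be replaced by a volume-correcting diffeomorphism (your flow of $s(t)Z$ with $s(t)=O(t^2)$), since a rescaling is not induced by an ambient diffeomorphism.
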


\subsection*{Acknowledgments} During the preparation of this manuscript, OpenAI's ChatGPT was used to assist in identifying existing examples of non-standard Einstein metrics on the sphere relevant to Remark~\ref{rem:static-ball-rigidity}, locating a reference for the test-function construction used in the proof of Proposition~\ref{prop:cheeger-boundary-layer-sign-change}, improving the exposition and clarity of the text, proofreading, and providing feedback on the presentation of mathematical arguments. The research ideas and all proofs were developed independently of AI tools. The author assumes full responsibility for the mathematical content and conclusions of the article. This article is an output of research project HSE-BR-2025-84, implemented as part of the Basic Research Program at HSE University.





\bibliography{mybib}
\bibliographystyle{alpha}

\end{document}